\theoremstyle{plain}
\newtheorem{theorem}{Theorem}[section]
\newtheorem*{theorem*}{Theorem}
\newtheorem*{maintheorem-intro}{Theorem}
\newtheorem*{maintheorem-intro-2}{Theorem~\ref{Bridge number and genus}}
\newtheorem*{theorem-cablingconj}{Theorem~\ref{apps1} (1)}
\newtheorem*{theorem-toroidal}{Specialization of Theorem~\ref{apps1} (2)}
\newtheorem*{theorem-lens}{Theorem~\ref{Bounding distance - special}(1)}
\newtheorem*{theorem-SFS}{Theorem~\ref{Bounding distance - special}(2)}
\newtheorem*{theorem-cosmetic}{Theorem}
\newtheorem*{theorem-bridge}{Specialization of Corollary~\ref{Cor: exceptional bridge}}
\newtheorem*{theorem-Heeggenus}{Corollary~\ref{Cor: exceptional bridge} (2)}
\newtheorem{lemma}[theorem]{Lemma}
\theoremstyle{definition}
\newtheorem{remark}[theorem]{Remark}
\theoremstyle{definition}
\newcommand{\bi}{\begin{itemize}}
\newcommand{\ei}{\end{itemize}}
\newcommand{\be}{\begin{enumerate}}
\newcommand{\ee}{\end{enumerate}}
\begin{document}
   \title[On the Triple Point Number of Surface-Links in Yoshikawa's Table]{On the Triple Point Number of Surface-Links in Yoshikawa's Table}
   \author{Nicholas Cazet}
   
   \begin{abstract}

Yoshikawa made a table of knotted surfaces in $\mathbb{R}^4$ with ch-index 10 or less. This remarkable table is the first to enumerate knotted surfaces analogous to the classical prime knot table. A broken sheet diagram of a surface-link is a generic projection of the surface in $\mathbb{R}^3$ with crossing information along its singular set. The minimal number of triple points among all broken sheet diagrams representing a given surface-knot is its triple point number. This paper compiles the known triple point numbers of the surface-links represented in Yoshikawa's table and calculates or provides bounds on the triple point number of the remaining surface-links.

 \end{abstract}

\maketitle

\section{Introduction}

A {\it surface-link} is a smoothly embedded closed surface  in $\mathbb{R}^4$.  Two surface-links are {\it equivalent} if they are related by a smooth ambient isotopy. A surface-link diffeomorphic to a 2-sphere is called a {\it 2-knot}, and a surface-link with only 2-sphere components each bounding a  3-ball disjoint from the other is called a {\it trivial 2-link}. A surface-link $F$ is called {\it ribbon} if there is a trivial 2-link $\mathcal{O}$ and a collection of embedded (3-dimensional) 1-handles attaching to $\mathcal{O}$ such that $F$ is the result of surgery along these 1-handles. These 1-handles intersect $\mathcal{O}$ only in their attaching regions.

For an orthogonal projection $p: \mathbb{R}^4\to\mathbb{R}^3$, a surface-link $F$ can be perturbed slightly so that $p(F)$ is a generic surface.  Each point of the generic surface $p(F)$ has a neighborhood in 3-space diffeomorphic  to $\mathbb{R}^3$ such that the image of the  generic surface under the diffeomorphism looks like 1, 2, or 3 coordinate planes or the cone on a figure 8 (Whitney umbrella).
These points are called regular points, double points,  triple points, and  branch points. Triple points and branch points are isolated while double points are not and lie on curves called {\it double point curves}. The union of non-regular points is the {\it singular set} of the generic surface \cite{CKS}, \cite{kamada2017surface}.

 \begin{figure}[h]
\begin{overpic}[unit=.5mm,scale=.25]{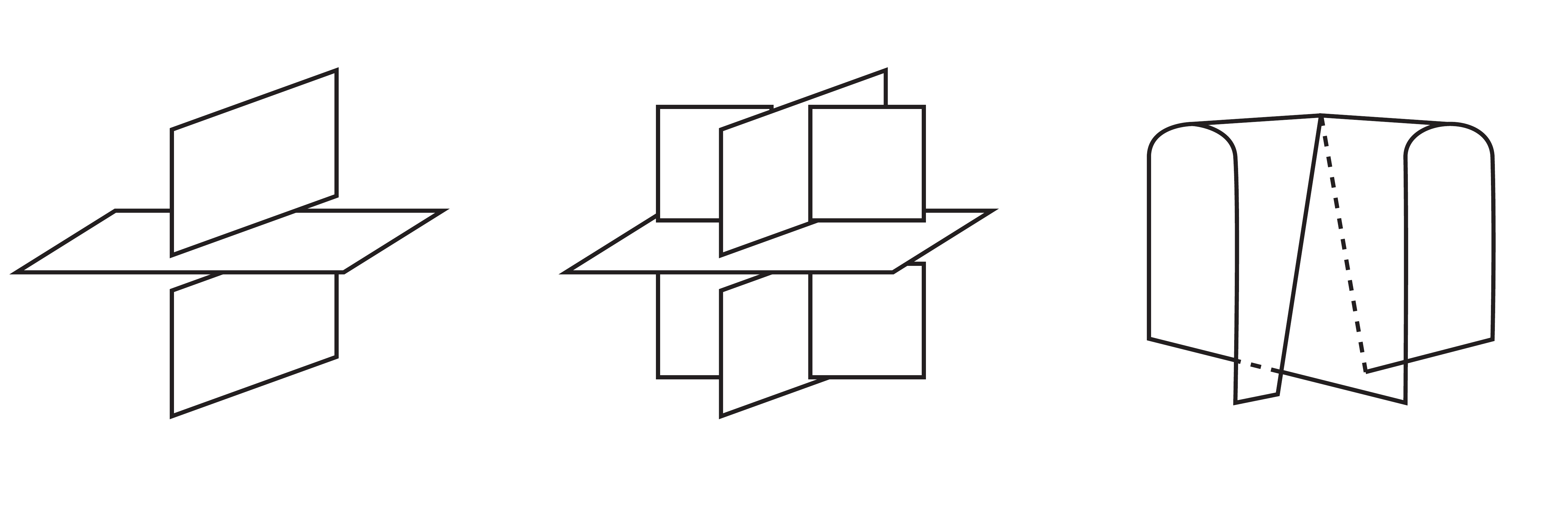}

\end{overpic}
\caption{Local images of a double point, triple point, and branch point.}
\label{fig:1}
\end{figure}

A {\it broken sheet diagram} of a surface-link $F$ is a generic projection $p(F)$ with  consistently broken sheets along its singular set,  see Figure \ref{fig:1} and \cite{cartersaito}. The sheet that lifts below the other, in respect to a height function determined by the direction of the orthogonal projection $p$, is locally broken at their intersection. All surface-links admit a broken sheet diagram, and all broken sheet diagrams lift to surface-links in 4-space. Although, not all compact, generic surfaces in 3-space can be given a broken sheet structure  \cite{Carter1998}. The minimal number of triple points among all broken sheet diagrams representing a surface-link $F$ is called the {\it triple point number} of $F$ and is denoted $t(F)$. A surface-link with a triple point number of 0 is called {\it pseudo-ribbon}. Every ribbon surface-link is  pseudo-ribbon, and every pseudo-ribbon 2-knot is ribbon \cite{yajima1964}. However, the turned spun torus surface-link of a non-trivial classical knot is pseudo-ribbon and non-ribbon \cite{boyle}, \cite{iwase}, \cite{livingston}.

A {\it singular link diagram} is an immersed link diagram in the plane with crossings and traverse double points called {\it vertices}. At each vertex assign a {\it marker}, a local choice of two non-adjacent regions in the complement of the vertex. Such a marked singular link diagram is called a {\it ch-diagram} \cite{CKS}, \cite{kamada2017surface}, \cite{kamadakim},   \cite{Yoshikawa}. One of the two smoothings of a vertex connects the two regions of the marker, the positive resolution $L^+$, and one separates the marker's regions, the negative resolution $L^-$, see Figure \ref{fig:smoothing}.   If $L^-$ and $L^+$ are unlinks, then the ch-diagram is said to be {\it admissible}. Admissible ch-diagrams represent surface-links and induce broken sheet diagrams, this is described in Section \ref{sec2}, and every surface-link defines an admissible ch-diagram.


\begin{figure}[h]

\begin{overpic}[unit=.46mm,scale=.6]{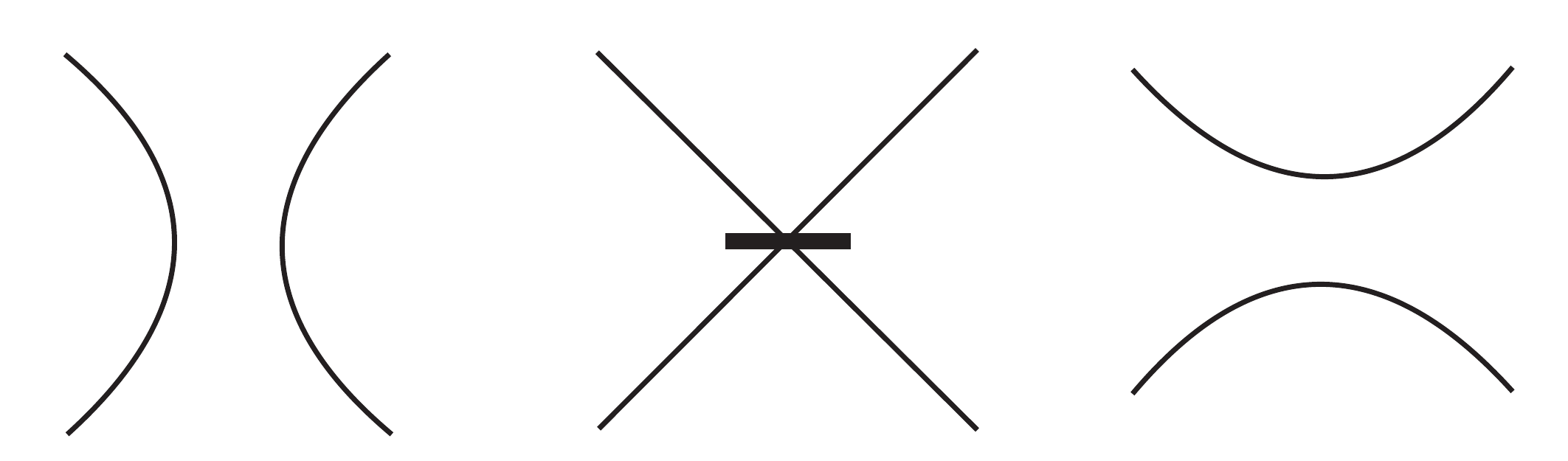}\put(37,-2){$L^-$}\put(233,-2){$L^+$}

\end{overpic}

\caption{Smoothings of a marked vertex.}
\label{fig:smoothing}
\end{figure}

Including the 3 Reidemeister moves of classical link diagrams, there are 8 moves on ch-diagrams called {\it Yoshikawa moves}, see p.61 of \cite{kamada2017surface}. Two admissible ch-diagrams represent equivalent surface-links if and only if they are Yoshikawa move equivalent, Theorem 3.6.3 of \cite{kamada2017surface}. The ch-index of a ch-diagram is the number of marked vertices plus the number of crossings. The {\it ch-index} of a surface-link $F$, denoted ch($F$), is the minimum of ch-indices ch$(D)$ among all admissible ch-diagrams $D$ representing $F$. A sufrace-link $F$ is said to be {\it weakly prime} if $F$ is not the connected sum of any two surfaces $F_1$ and $F_2$ such that ch$(F_i)$ $<$ ch$(F)$. Yoshikawa classified weakly prime surface-links whose ch-index is 10 or less in \cite{Yoshikawa}. He generated a table of their representative ch-diagrams up to orientation and mirror. A rendition of his table is done in Figure \ref{fig:yoshikawa}. His notation is of the form $I_k^{g_1,\dots,g_c}$ where $I$ is the surface-link's ch-index and $g_1,\dots,g_c$ are the genera of its components.

 This paper considers the triple point number of the surface-links represented in Yoshikawa's table. Section \ref{sec3} tabulates the known triple point numbers of the surface-links from his table and calculates or provides bounds on the triple point number of the remaining surface-links.

\begin{figure}[h]

\begin{overpic}[unit=.5mm,scale=.6]{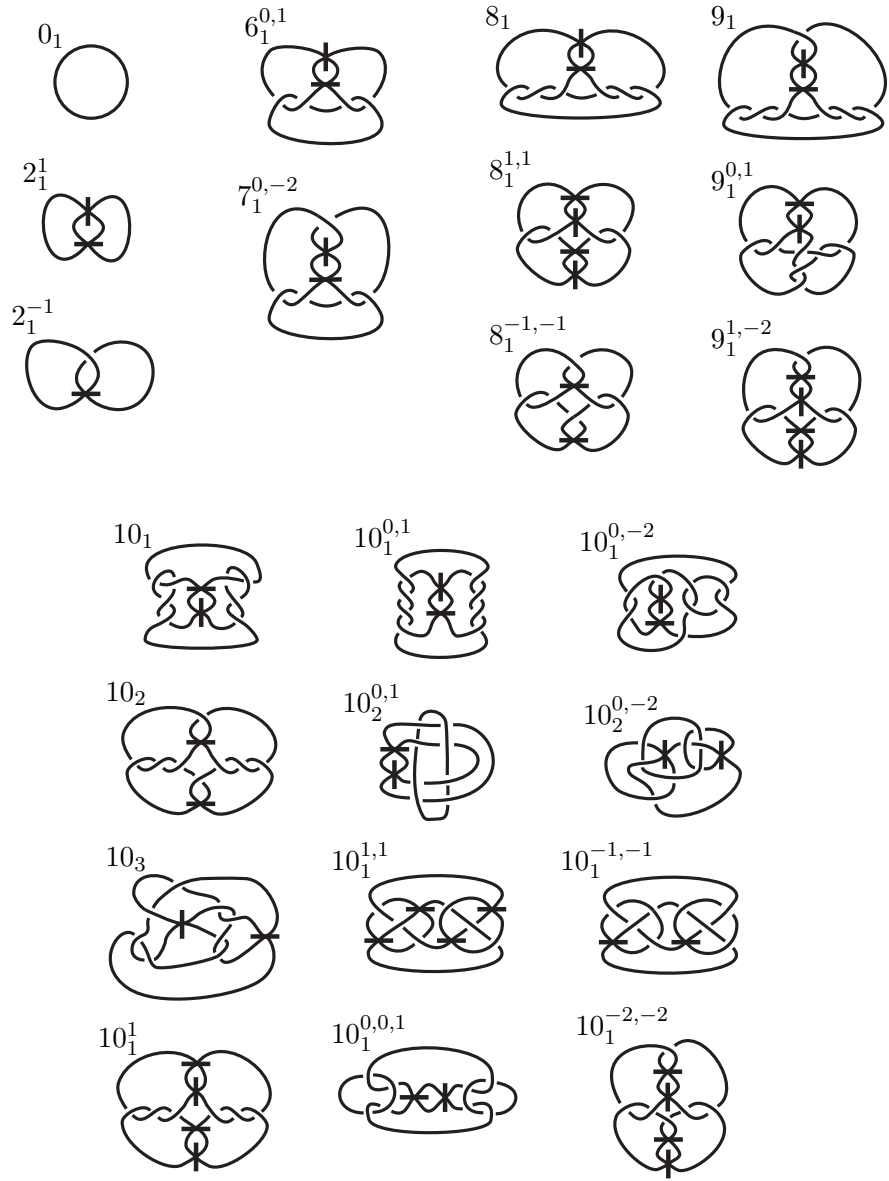}\put(22,308){$0_1$}\put(18,271){$2_1^1$}\put(15,233){$2_1^{-1}$}\put(77,310){$6_1^{0,1}$}\put(75,265){$7_1^{0,-2}$}
\put(141,313){$8_1$}\put(142,272){$8_1^{1,1}$}\put(142,228){$8_1^{-1,-1}$}\put(201,313){$9_1$}\put(201,269){$9_1^{0,1}$}\put(201,227){$9_1^{1,-2}$}

\put(42,175){$10_1$}\put(40,133){$10_2$} \put(40,89){$10_3$}\put(38,41){$10_1^1$}\put(106,174){$10_1^{0,1}$}\put(103,130){$10_2^{0,1}$}\put(100,88){$10_1^{1,1}$}\put(100,42){$10_1^{0,0,1}$} \put(166,173){$10_1^{0,-2}$}\put(167,127){$10_2^{0,-2}$} 
\put(161,88){$10_1^{-1,-1}$} \put(165,44){$10_1^{-2,-2}$}

\end{overpic}

\caption{Yoshikawa's table of surface-links with ch-index no greater than 10 \cite{Yoshikawa}.}
\label{fig:yoshikawa}
\end{figure}

\section{Induced Broken Sheet Diagrams of Admissible Ch-diagrams}
\label{sec2}

Given a surface-knot $F\subset \mathbb{R}^4$ and a vector ${\bf v}\in \mathbb{R}^4$, perturb $F$ such that the orthogonal projection of $\mathbb{R}^4$ onto $\mathbb{R}$ in the direction of ${\bf v}$ is a Morse function. For any $t\in \mathbb{R}$, let $\mathbb{R}_t^3$ denote the affine hyperplane orthogonal to ${\bf v}$ that contains the point $t {\bf v}$.  Morse theory allows for the assumption that all but finitely many of the non-empty cross-sections $F_t=\mathbb{R}_t^3 \cap F$ are classical links. The decomposition $\{F_t\}_{t\in\mathbb{R}}$ is called a {\it motion picture} of $F$. It may also be assumed that the exceptional cross-sections contain minimal points, maximal points, and/or singular links \cite{CKS}, \cite{kamada2017surface}. 

There is a product structure between Morse critical points implying that only finitely many cross-sections are needed to decompose, or construct, $F$. Although, a sole cross-section of a product region does not uniquely determine its knotting, i.e. its ambient isotopy class relative boundary, see \cite{CKS}. Project the cross-sections $\{F_t\}_{t\in\mathbb{R}}$  onto a plane to get an ordered family of planar diagrams containing classical link diagrams, minimal points, maximal points, and singular link diagrams. These planar diagrams are {\it stills} of the motion picture. The collection of all stills is also referred to as a motion picture of the surface-link. The product structure between critical points implies that cross-sections between consecutive critical points represent the same link. Therefore, there is a sequence of Reidemeister moves and planar isotopies between the stills of a motion picture that exists between consecutive critical points. There is a translation of Reidemeister moves in a motion picture to sheets in a broken sheet diagram.

  Associate the time parameter of a Reidemeister move with the height of a local broken sheet diagram.  A Reidemeister III move gives a triple point diagram, a Reidemeister I move corresponds to a branch point, and a Reidemeister II move corresponds to a maximum or minimum of a double point curve, see Figure \ref{fig:r}. Triple points of the induced broken sheet diagram are in correspondence with the Reidemeister III moves in the motion picture. To generate a broken sheet diagram of the entire surface-link include sheets containing saddles for each saddle point in the motion picture.

 \begin{figure}[h]

\begin{overpic}[unit=.5mm,scale=.6]{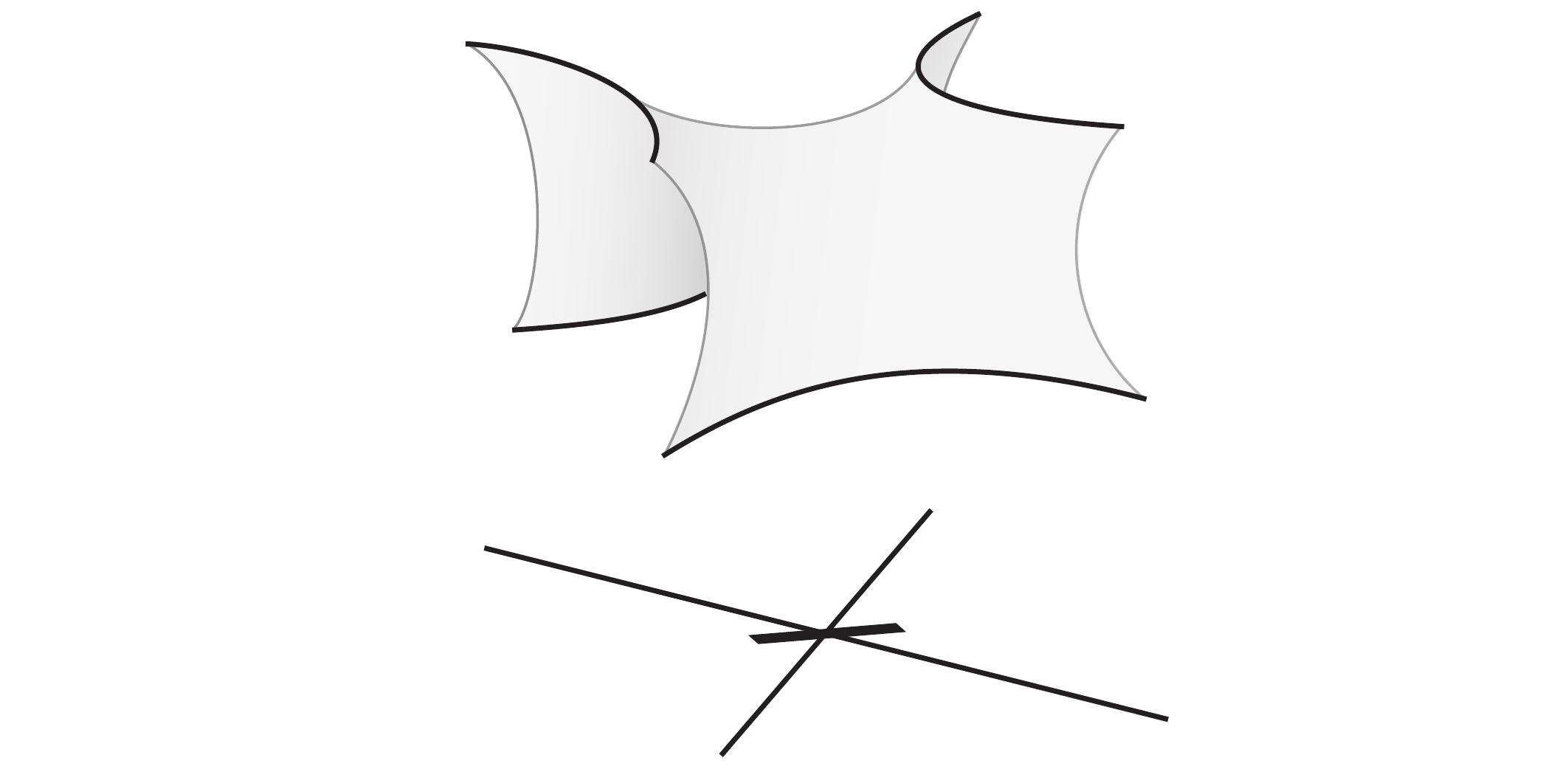}

\end{overpic}

\caption{Induced saddle sheet.}
\label{fig:saddle}
\end{figure}

Now, consider an admissible ch-diagram. There is a finite sequence of Reidemeister moves that takes $L^-$ and $L^+$ to crossing-less diagrams $O^-$ and $O^+$. Translate these Reidemeister moves to a broken sheet diagram.  In between the still of $L^-$ and $L^+$ and for each marked vertex include sheets containing the saddles traced by transitioning from $L^-$ to  $L^+$ in the local picture of Figure \ref{fig:smoothing}. These saddles are locally pictured in Figure \ref{fig:saddle}. Finally,  cap-off $O^-$ and $O^+$ with trivial disks to produce a broken sheet diagram of a surface-link.

 \section{The Triple Point Numbers of Surface-Links Represented in Yoshikawa's Table}
 \label{sec3}

The three trivial surface-knots $0_1$, $2_1^1$, and $2_1^{-1}$ are clearly pseudo-ribbon. Most of the remaining surface-links in Yoshikawa's table are ribbon.

\begin{figure}[h]
\includegraphics[scale=.3]{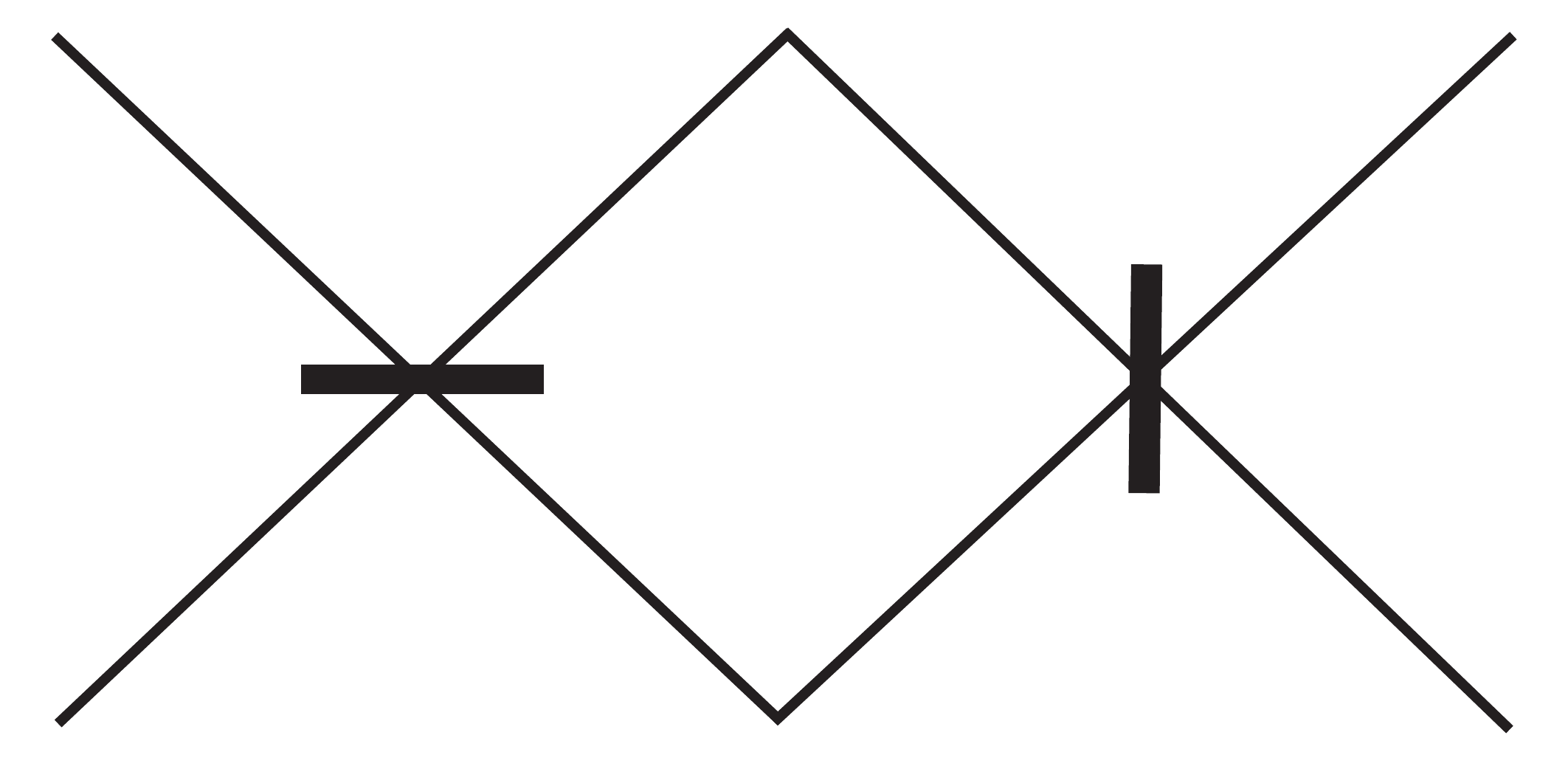}

\caption{Marked vertex pattern.}
\label{fig:ribbon}
\end{figure}

\begin{figure}[h]

\begin{overpic}[unit=.5mm,scale=.6]{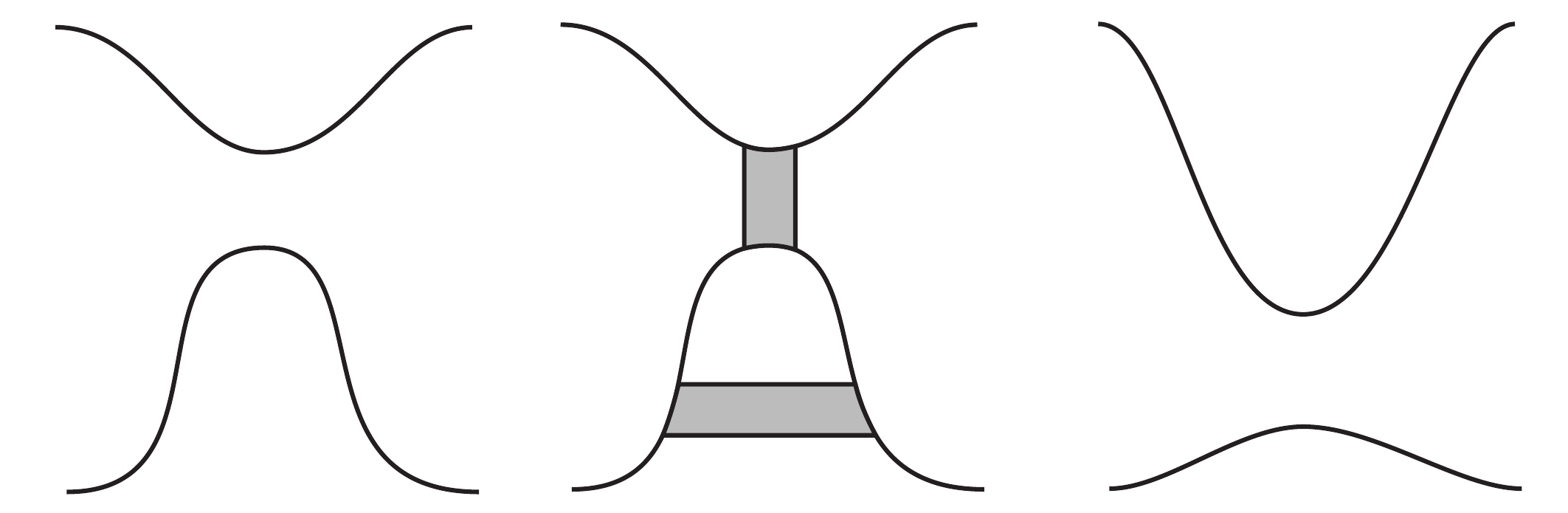}

\end{overpic}

\caption{Motion picture of the saddles represented by Figure \ref{fig:ribbon}.}
\label{fig:band}
\end{figure}

\begin{theorem}

Admissible ch-diagrams with vertices paired and isolated in the pattern of Figure \ref{fig:ribbon} represent ribbon surface-links.

\label{thm:ribbon}
\end{theorem}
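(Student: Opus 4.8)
The plan is to use the construction of Section~\ref{sec2} to produce an explicit motion picture of the surface-link $F$ represented by such a ch-diagram, and then to recognize that motion picture as surgery on a trivial $2$-link along embedded $1$-handles. Recall that the induced broken sheet diagram reads, from bottom to top, as the trivial disks capping $O^-$, a product region carrying the Reidemeister moves from $O^-$ to $L^-$, one saddle sheet for each marked vertex realizing the transition from $L^-$ to $L^+$, a product region from $L^+$ to $O^+$, and finally the trivial disks capping $O^+$. Topologically $F$ is therefore a union of lower and upper trivial disks joined by bands, one band for each marked vertex.

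Because the diagram is admissible, $L^-$ and $L^+$ are unlinks, so the lower caps together with their product region bound $L^-$ by disjoint trivial disks, and similarly the upper caps bound $L^+$. The goal is thus to show that the bands coming from the marked vertices assemble into embedded $1$-handles attached to an unknotted, unlinked family of $2$-spheres, meeting those spheres only along their attaching regions; this is exactly the definition of ribbon.

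This is where the hypothesis on the pattern of Figure~\ref{fig:ribbon} enters. Since the vertices are isolated, the corresponding saddle sheets occupy disjoint regions of the motion picture and may be slid to distinct, non-interfering heights; since they are paired, I would treat the two saddles of each pair together. The heart of the argument is the local computation recorded in Figure~\ref{fig:band}: the motion picture traced by the two saddles of a single pair is precisely that of a trivial tube, so that performing the pair has the net effect of attaching one embedded $1$-handle (one band pushes out a finger and its partner closes the finger into a tube), while on the sheets away from the pair nothing happens. Doing this for every pair exhibits $F$ as the result of surgery on a trivial $2$-link along one embedded $1$-handle per pair, hence as a ribbon surface-link.

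The step I expect to be the main obstacle is the local-to-global passage: one must verify that each pair genuinely contributes an embedded $1$-handle meeting the underlying $2$-spheres only in its two attaching disks, that distinct handles stay disjoint, and that the spheres obtained from the caps are honestly unknotted and unlinked. Both halves of the hypothesis are used here --- ``isolated'' lets the saddle sheets of different pairs be separated so their handles do not interfere, while ``paired in the pattern of Figure~\ref{fig:ribbon}'' guarantees, through Figure~\ref{fig:band}, that each pair yields an unknotted, unlinked tube rather than an arbitrary band. Confirming these embeddedness and triviality statements directly from the local pictures is the only place where real care is needed.
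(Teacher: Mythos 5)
Your strategy is the paper's strategy: read the surface off the Section~\ref{sec2} motion picture, use Figure~\ref{fig:band} to analyze the paired saddles, and exhibit the surface as $1$-handle surgery on a $2$-link of spheres, one handle per pair of vertices (the paper's Figure~\ref{fig:handle}). The problem is that your writeup stops exactly where the paper's one substantive input is required. You correctly flag that one must show ``the spheres obtained from the caps are honestly unknotted and unlinked,'' but you never discharge this, and your suggestion that it can be confirmed ``directly from the local pictures'' is not how it can be done. As written, the proposal shows only that $F$ is obtained by attaching bands to \emph{some} link of $2$-spheres; the definition of ribbon demands a \emph{trivial} $2$-link, so without the triviality statement the proof is incomplete.

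The paper closes this gap in two steps that are absent from your outline. First, Figure~\ref{fig:band} is used for more than the tube picture: it shows that for vertices in the pattern of Figure~\ref{fig:ribbon} the negative and positive resolutions are \emph{identical unlink diagrams up to a small planar isotopy}. This is what lets one build the base surface explicitly: cap $L^-$ below and $L^+$ above by trivial disk systems and join them by an honest product region realizing that small isotopy, producing closed spheres each with exactly one minimum and one maximum. (Your framing, lower caps bounding $L^-$ and upper caps bounding $L^+$ joined by bands, skips the point that without the saddles these two disk systems have a priori different boundaries, and it is precisely the Figure~\ref{fig:band} identification that allows the product region to glue them into spheres.) Second --- and this is the missing idea --- the triviality of the resulting $2$-link is not a local verification at all: it follows from the global fact that any two trivial disk systems with the same boundary are smoothly ambient isotopic rel boundary \cite{kamada2017surface}, applied to the spheres just constructed. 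Your use of ``isolated'' to separate the saddle sheets and of ``paired'' to form one embedded handle per pair is the right use of the hypotheses and matches Figure~\ref{fig:handle}; once the base trivial $2$-link is established as above, that part of your argument does complete the proof.
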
 

\begin{proof}
Figure \ref{fig:band} shows that the unlink diagrams of the positive and negative resolutions are identical, up to a small planar isotopy. The positive and negative resolutions are the boundary of trivial disk systems, families of disjoint 2-disks with one maximal (or minimal) point. Between these trivial disk systems include a product region representing the aforementioned small planar isotopies between the two resolutions to produce a collection of 2-knots each with just one minimal point and one maximal point.  With no saddles, the constructed surface-link with all 2-sphere components is a trivial 2-link since any two trivial disk systems that have the same boundary are smoothly ambient isotopic (rel boundary) \cite{kamada2017surface}.   Figure \ref{fig:handle} shows that the addition of the saddles represented by the marked vertices is equivalent to 1-handle surgery on this trivial 2-link.

\end{proof}

\begin{figure}
\includegraphics[scale=.7]{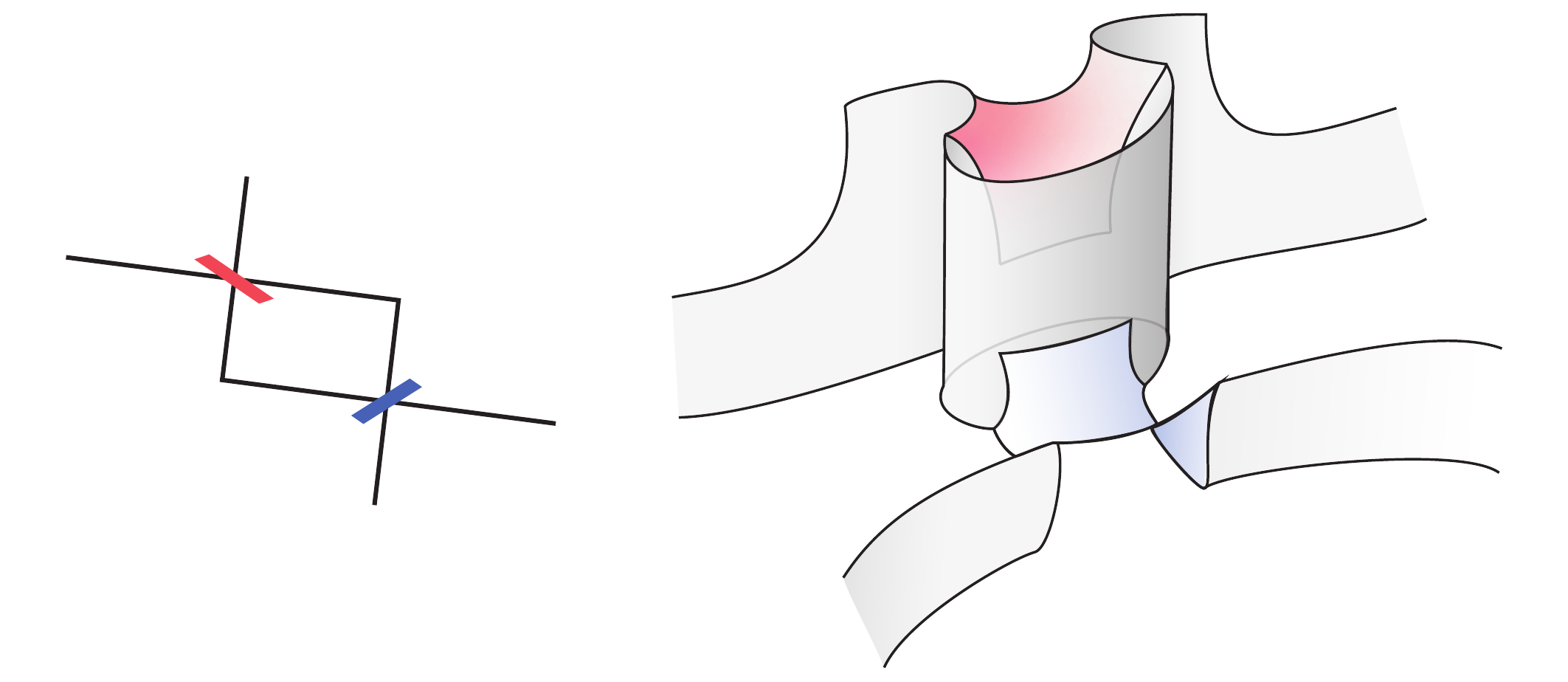}
\caption{Ribbon handle associated with the given marked vertex pattern.}
\label{fig:handle}
\end{figure}

 As a corollary to Theorem \ref{thm:ribbon}, all nontrivial surface-links of Yoshikawa's table except $8_1^{-1,-1}$, $10_2$, $10_3$, $10_1^{1,1}$, $10_2^{0,-2}$, and $10_1^{-1,-1}$ are ribbon.

\begin{theorem}[Satoh '01 \cite{satoh3}, Kamada and Oshiro '09 \cite{sym}]

The triple point number of $8_1^{-1,-1}$ is 2.

\end{theorem}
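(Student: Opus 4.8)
The plan is to prove the equality by establishing the two inequalities $t(8_1^{-1,-1}) \le 2$ and $t(8_1^{-1,-1}) \ge 2$ separately, the first by an explicit construction and the second by a colouring obstruction. This two-sided strategy reflects the joint attribution: the construction is the elementary half, while the sharp lower bound is where all the difficulty sits.

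For the upper bound I would start from the admissible ch-diagram representing $8_1^{-1,-1}$ in Yoshikawa's table and apply the translation of Section \ref{sec2}. Writing the diagram as $L^-$ and $L^+$, I would exhibit short sequences of Reidemeister moves carrying $L^\pm$ to crossing-less diagrams $O^\pm$, chosen so that exactly two Reidemeister III moves occur in total; inserting the saddle sheets dictated by the marked vertices and capping $O^\pm$ with trivial disks then yields a broken sheet diagram whose only triple points correspond to those two Reidemeister III moves. Since the triple points of an induced diagram are in bijection with its Reidemeister III moves, this produces a representative with precisely two triple points, giving $t(8_1^{-1,-1}) \le 2$. In practice one searches for the most economical motion picture (or writes down such a motion picture directly) rather than taking the naive translation, which typically over-counts.

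For the lower bound, which carries the real content, the difficulty is that a non-orientable surface-link could a priori be represented with very few triple points, so I need an obstruction that forces at least two. Because both components are non-orientable, the double point curves need not admit coherent co-orientations and the ordinary quandle cocycle invariant is unavailable; instead I would use the symmetric quandle cocycle invariant of Kamada and Oshiro, which is built precisely for this setting. Concretely, I would choose a finite symmetric quandle with a good involution (a small dihedral quandle being the natural first candidate) together with a $2$-cocycle valued in an abelian group, colour the motion picture, and form the state sum in which each coloured triple point contributes a cocycle weight, the total weight of a colouring being the sum of these contributions. Computing this invariant on $8_1^{-1,-1}$ and exhibiting a value that cannot be realised by the contribution of a single weighted triple point would yield $t(8_1^{-1,-1}) \ge 2$; the same nonvanishing simultaneously certifies that the surface-link is not pseudo-ribbon.

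The main obstacle is the lower bound, and within it the selection of a symmetric quandle and $2$-cocycle whose invariant is both computable on the given diagram and strong enough to detect the passage from one to two triple points. Verifying the cocycle conditions over the symmetric quandle and checking invariance of the resulting state sum under the Yoshikawa moves is the delicate part, whereas the upper-bound construction is essentially a bookkeeping exercise once an efficient motion picture has been located.
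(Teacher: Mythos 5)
Your two-sided strategy is sound and the equality does hold, but on the lower bound you have chosen one of the two cited routes while the paper's own machinery emphasizes the other. The paper records Satoh's original argument via Lemma~\ref{lem}: the surface-link $8_1^{-1,-1}$ is ${\bf P}^2$-irreducible and its two components are projective planes, each with normal Euler number of absolute value $2$, so $t(8_1^{-1,-1})\geq (2+2)/2 = 2$ immediately --- no quandle, no cocycle, no state sum. This is exactly how the paper obtains the bound $2 \leq t(10_1^{-1,-1})$ as well. Your proposed route through the symmetric quandle cocycle invariant is the Kamada--Oshiro half of the attribution, and it is genuinely different: it requires finding a symmetric quandle with a good involution and a $2$-cocycle whose state sum on $8_1^{-1,-1}$ cannot be realised by a diagram with at most one triple point, which is substantially more work. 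What that extra work buys is robustness: the normal Euler number bound collapses when $e(F_i)=0$, as the paper's own remark about the Klein bottle component of $10_2^{0,-2}$ illustrates, whereas the symmetric quandle method is insensitive to $e$ and so extends to cases Lemma~\ref{lem} cannot touch. Two small cautions on your sketch: for the upper bound, ``bookkeeping'' understates matters --- you must actually exhibit Reidemeister sequences from $L^\pm$ to crossing-less diagrams with only two Reidemeister III moves in total (such a motion picture exists, but nothing in the general translation of Section~\ref{sec2} guarantees economy); and for the lower bound your argument correctly must rule out both $t=0$ (all weights vanish) and $t=1$ (every colouring contributes a single signed cocycle value), which you do note but which is where the specific choice of quandle and cocycle is forced.
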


\noindent A surface in $\mathbb{R}^4$ is called ${\bf P}^2$-{\it irreducible} if it is not the connected sum $F_1\# F_2$ where $F_1$ is any surface and $F_2$ is one of the two standard projective planes, i.e. $2_1^{-1}$ or its mirror. All surfaces in Yoshikawa's table except for $2_1^{-1}$ are irreducible, Section 5 of \cite{Yoshikawa}. Satoh's lower bound calculation relies on each component being non-orientable, ${\bf P}^2$-irreducible, and having nonzero normal Euler number. 
 
 \begin{lemma}[Satoh '01 \cite{satoh3}]
 
 For a ${\bf P}^2$-irreducible surface-link $F=F_1\cup\cdots \cup F_n$ \[ t(F)\geq (|e(F_1)|+\cdots+|e(F_n)|)/2,\] where $e(F_i)$ denotes the normal Euler number of the surface-knot $F_i$. 
 
 \label{lem}
 \end{lemma}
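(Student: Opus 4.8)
The plan is to compute each normal Euler number $e(F_i)$ directly from a broken sheet diagram and then charge its absolute value to the triple points of that diagram; since the resulting inequality will hold for \emph{every} diagram, it will in particular hold for one realizing $t(F)$. Recall that $e(F_i)$ is the self-intersection number of $F_i$ with a generic section of its normal bundle, equivalently the algebraic count of intersection points of $F_i$ with a small push-off $F_i'$ in $\mathbb{R}^4$. Fixing the projection direction used to form the diagram, I would push $F_i$ off in the normal direction projecting to the height direction, so that the intersections $F_i\cap F_i'$ localize at the branch points of the diagram. This yields the local formula $e(F_i)=\sum_{b\in B_i}\epsilon(b)$, where $B_i$ is the set of branch points lying on $F_i$ and each $\epsilon(b)=\pm 1$ is a sign read off from the local cross-cap model. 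As a sanity check, the single cross-cap diagram of $\mathbb{RP}^2$ has two branch points of equal sign and $e=\pm 2$, matching the formula (and explaining why $2_1^{-1}$ must be excluded).

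Taking absolute values and summing over components gives
\[
\sum_{i=1}^{n}|e(F_i)|=\sum_{i=1}^{n}\Bigl|\sum_{b\in B_i}\epsilon(b)\Bigr|,
\]
so the content of the lemma is the diagrammatic inequality $\sum_i|e(F_i)|\leq 2\,t(D)$, where $t(D)$ is the number of triple points of the diagram $D$. The mechanism I would use is branch-point cancellation: two branch points joined by a double point arc carrying no triple point can be removed by a Roseman move (a cusp/finger cancellation). If such a pair has opposite signs its removal leaves $e(F_i)$ unchanged, so I may assume that every triple-point-free double arc joins two branch points of equal sign. The crucial observation is that cancelling an equal-sign pair along a triple-point-free arc would split off a standard $\mathbb{RP}^2$ summand from $F_i$; since $F$ is $\mathbf{P}^2$-irreducible this is impossible, so no such isolated pair can be detached, and every branch point surviving in a reduced diagram must be tied to the diagram's triple points through the double point curve graph.

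With the branch points thus anchored to triple points, I would run a charging argument on the graph whose vertices are the triple points (valence six) and branch points (valence one) and whose edges are the double point arcs: because the cross-cap escape route is blocked by $\mathbf{P}^2$-irreducibility, each triple point can terminate at most two branch-arc chains contributing a net sign to $\sum_i|e(F_i)|$, producing the factor $2$ and hence $\sum_i|e(F_i)|\leq 2\,t(D)$. Applying this to a diagram with $t(D)=t(F)$ yields $t(F)\geq(|e(F_1)|+\cdots+|e(F_n)|)/2$. The extremal two-component surface-link $8_1^{-1,-1}$, with two $\mathbb{RP}^2$ components each of normal Euler number $\pm 2$ and a diagram with four branch points and two triple points, saturates every inequality in the chain and fixes the constant.

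The main obstacle is precisely the structural step connecting branch points to triple points: making rigorous, via the Roseman-move calculus, which branch-point pairs can be cancelled, verifying that the only obstruction to reducing a diagram to a triple-point-anchored configuration is the forbidden $\mathbb{RP}^2$-splitting, and then carrying out the sign bookkeeping so that each triple point absorbs at most two units of $\sum_i|e(F_i)|$. By contrast, the differential-topological identity $e(F_i)=\sum_b\epsilon(b)$ and the multi-component accounting—each branch point, and hence all of $e(F_i)$, being supported on the single component $F_i$—are routine once the local sign conventions are fixed.
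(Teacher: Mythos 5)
First, a point of comparison: the paper does not prove this lemma at all --- it is quoted from Satoh '01 \cite{satoh3} --- so your attempt can only be measured against Satoh's published argument, not against anything in this text. Your opening step is correct and standard: the normal Euler number $e(F_i)$ is the algebraic count of branch points on $F_i$, proved exactly as you say, by pushing off in the direction used for the projection so that the intersections with the push-off localize at branch points (this is the branch-point formula of Carter--Saito/Banchoff), and your cross-cap sanity check is right. Your overall strategy --- rule out same-sign branch-point pairs on triple-point-free double arcs via $\mathbf{P}^2$-irreducibility, then charge the surviving signed branch points to triple points --- is also the correct shape of the known proof.

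The genuine gap is the step you yourself flag: the claim that ``each triple point can terminate at most two branch-arc chains contributing a net sign.'' This is not a bookkeeping detail; it \emph{is} the lemma. A triple point is the endpoint of six double-arc germs, so the graph/valence argument you sketch gives only $\sum_i |e(F_i)| \leq 6\,t(D)$, and nothing you write explains why six branch points of the same sign cannot all attach to one triple point. Closing the factor from $6$ to $2$ requires actual sign-cancellation lemmas for branch points joined through a common triple point, which in Satoh's treatment comes from a careful analysis of the double decker set (the preimage curves in $F$) and the broken sheet conditions --- invoking ``the cross-cap escape route is blocked by $\mathbf{P}^2$-irreducibility'' a second time is not an argument, since irreducibility was already spent on the triple-point-free arcs. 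Two secondary problems: (i) your Roseman-move cancellation of opposite-sign pairs is unnecessary (such pairs contribute zero to the sum regardless) and, as stated, dubious, since the branch-point-pair elimination move needs a standard neighborhood of the whole double arc, which other sheets may obstruct; and (ii) the assertion that a same-sign pair on a triple-point-free arc splits off a standard $\mathbb{RP}^2$ summand is the right statement but is given no proof --- one must actually produce the embedded $S^3$ in $\mathbb{R}^4$ realizing the connected-sum decomposition from the local picture of the arc and its two decker arcs. As it stands, your proposal establishes the branch-point formula and correctly identifies the architecture of the proof, but the quantitative heart of the inequality is asserted, not proved.
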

 
  \noindent Kamada and Oshiro later proved $t(8_1^{-1,-1})=2$  using the symmetric quandle cocycle invariant. This method does not depend on the normal Euler number of the surface-link's components.

\begin{theorem}[Satoh and Shima '04 \cite{satoh1} '05 \cite{satoh6}]

The triple point number of the 2-twist-spun trefoil is 4, and the triple point number of the 3-twist-spun trefoil is 6.

\end{theorem}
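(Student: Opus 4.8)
The plan is to establish each equality by a matching pair of bounds: an upper bound coming from an explicit broken sheet diagram, and a lower bound coming from an invariant that obstructs diagrams with fewer triple points. This two-sided strategy is forced by the definition of $t(F)$ as a minimum, and in practice the two halves require completely different techniques.

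For the upper bounds I would build the broken sheet diagrams directly from the motion picture of twist-spinning. The $n$-twist-spin of a knot $K$ admits a motion picture in which a diagram of the arc $K$ (with a trivial strand removed) is rotated through a full $2\pi$ while simultaneously given $n$ meridional twists; realizing this rotation-with-twisting as a concrete sequence of Reidemeister moves on link diagrams, one reads off the triple points as exactly the Reidemeister III moves that occur, by the translation described in Section \ref{sec2}. Carrying this out for the trefoil, and choosing the diagram together with the order of moves so as to minimize the number of Reidemeister III's, I expect to produce a diagram of the $2$-twist-spun trefoil with exactly $4$ triple points and one of the $3$-twist-spun trefoil with exactly $6$, giving $t \le 4$ and $t \le 6$ respectively.

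For the lower bounds I would use the quandle cocycle invariant. Color the broken sheet diagram by the dihedral quandle $R_3 = \mathbb{Z}/3$ with operation $a * b = 2b - a$, and weight each triple point by Mochizuki's $3$-cocycle $\theta \in Z^3(R_3; \mathbb{Z}/3)$; the resulting state-sum invariant $\Phi_\theta(F) \in \mathbb{Z}[\mathbb{Z}/3]$ is then computed from the ch-diagram (equivalently the motion picture) of each twist-spun trefoil. Since the trefoil is $3$-colorable there are non-trivial colorings, and I expect the computation to yield a value of $\Phi_\theta$ whose non-identity group-ring terms carry nonzero coefficients. The obstruction comes from the following principle: in a diagram with $t$ triple points the Boltzmann weight of each coloring is a signed sum of exactly $t$ cocycle values, so no weight can be non-trivial when $t$ is too small. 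Combined with the fact that pseudo-ribbon ($t=0$) surface-knots have trivial invariant and a case analysis ruling out $t = 1, 2, 3$, this yields $t \ge 4$ for the $2$-twist-spun trefoil.

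The main obstacle is the sharp bound $t \ge 6$ for the $3$-twist-spun trefoil, where the crude "$t$ cocycle values" count is too weak to exclude $t = 5$. Here I would have to analyze the precise form of $\Phi_\theta$ and argue that its particular distribution of weights cannot be realized by any arrangement of five or fewer triple points, using finer combinatorial constraints on how colorings and Boltzmann weights can be organized around a small number of triple points — for instance, the structure of the double point curves incident to each triple point and the resulting relations among the admissible colors. This structural exclusion of $t \le 5$, rather than the invariant computation itself, is the delicate step, and is presumably where the bulk of the work in Satoh and Shima's two papers is concentrated.
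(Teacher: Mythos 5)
First, a point of orientation: the paper does not prove this statement at all. It is quoted verbatim from Satoh and Shima, and the paper's only contribution here is the observation that $10_2$ and $10_3$ represent the 2- and 3-twist-spun trefoils, so that the cited results pin down $t(10_2)=4$ and $t(10_3)=6$. There is therefore no internal proof to compare you against; the relevant comparison is with the two cited papers, and in broad architecture your outline does track them: the upper bounds come from explicit diagrams (motion pictures of twist-spinning realized as Reidemeister move sequences, with triple points read off from the Reidemeister III moves, exactly as in Section \ref{sec2}), and the lower bounds come from the state-sum invariant associated to Mochizuki's 3-cocycle of the dihedral quandle $R_3$.

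As a proof, however, the proposal has a genuine gap, and it sits exactly where you flag uncertainty. Nontriviality of the cocycle invariant by itself rules out only $t=0$, since a diagram with no triple points gives every coloring trivial weight; your ``signed sum of exactly $t$ cocycle values'' principle yields no numerical bound beyond that, because a single $\mathbb{Z}_3$-valued cocycle term can already be nontrivial. Consequently, ruling out $t=1,2,3$ for the 2-twist-spun trefoil is not a ``case analysis'' appended to the computation of $\Phi_\theta$ --- it is the entire content of the 2004 paper, and it runs through the global structure of the double point set: six double-curve ends meet each triple point, a branch point forces degenerate colors (hence trivial weight) along its double point curve, triple points joined by double point curves have correlated colors and signs so their contributions cancel or reinforce in controlled ways, and the multiset of weights realized over all $R_3$-colorings must reproduce the computed invariant. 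The exclusion of $t\le 5$ for the 3-twist-spun trefoil (the 2005 paper) is a finer iteration of the same analysis, not a refinement you can wave at. Your plan correctly identifies the two-sided strategy and even names the right invariant, but the steps you mark as ``I expect'' and ``presumably where the bulk of the work is'' are precisely the theorem; as written you have reconstructed the strategy of Satoh and Shima, not a proof.
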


\noindent Since $10_2$ represents the 2-twist-spun trefoil and $10_3$ represents the 3-twist-spun trefoil, Satoh and Shima's results give $t(10_2)=4$ and $t(10_3)=6$.

%
%
%
%

\begin{theorem}

The surface-link $10_1^{1,1}$ is pseudo-ribbon.

\end{theorem}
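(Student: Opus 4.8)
The goal is to produce a broken sheet diagram of $10_1^{1,1}$ with no triple points, since this forces $t(10_1^{1,1})=0$, which is precisely the pseudo-ribbon condition. The plan is to exploit the construction of Section~\ref{sec2}: starting from an admissible ch-diagram $D$, one builds a broken sheet diagram whose triple points are in bijection with the Reidemeister~III moves appearing in the chosen sequences that carry the negative and positive resolutions $L^-$ and $L^+$ to crossing-less diagrams $O^-$ and $O^+$. Consequently it is enough to find an admissible ch-diagram for $10_1^{1,1}$, together with reduction sequences $L^-\to O^-$ and $L^+\to O^+$, that use only Reidemeister~I and Reidemeister~II moves (and planar isotopy) and avoid every Reidemeister~III move.

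First I would read off the ch-diagram $D$ of $10_1^{1,1}$ from Yoshikawa's table (Figure~\ref{fig:yoshikawa}) and record its marked vertices and crossings; by admissibility both resolutions $L^\pm$ are unlinks. I would then compute the two link diagrams $L^-$ and $L^+$ explicitly by applying the negative and positive smoothings of Figure~\ref{fig:smoothing} at each marked vertex. Because $10_1^{1,1}$ fails the paired-and-isolated vertex hypothesis of Theorem~\ref{thm:ribbon}, the two resolutions will not coincide up to planar isotopy, so, unlike the ribbon case, the diagrams $L^-$ and $L^+$ must be simplified separately.

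The heart of the argument is to exhibit, for each of $L^-$ and $L^+$, an explicit sequence of Reidemeister~I and~II moves reducing it to the trivial crossing-less diagram. Since the ch-index is only $10$, each resolution carries few crossings, and I expect them to be removable greedily: locate a reducible kink or a clasp/bigon and cancel it by an R1 or R2 move, then repeat. Having no triple points, the resulting induced broken sheet diagram, obtained by capping $O^-$ and $O^+$ with trivial disks and inserting the saddle sheets of Figure~\ref{fig:saddle} for the marked vertices, realizes $10_1^{1,1}$ with zero triple points.

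The main obstacle I anticipate is certifying that no Reidemeister~III move is genuinely needed: reducing an unlink diagram by Reidemeister~I and~II moves alone is a real constraint, not a formality, so the crossings of $L^-$ and $L^+$ must be tracked with care. I would handle this by working directly with the two resolution diagrams and, should the table's diagram resist an R3-free reduction, first applying Yoshikawa moves to replace $D$ by a Yoshikawa-equivalent admissible ch-diagram whose resolutions are more transparently R1/R2-reducible. Any such diagram represents the same surface-link, so an R3-free reduction for it suffices to establish the claim.
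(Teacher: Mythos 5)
Your proposal follows essentially the same route as the paper: the paper's proof likewise observes that the induced broken sheet diagram has triple points only where Reidemeister~III moves occur, and then exhibits (in Figure~\ref{fig:10_11}) reductions of both resolutions of the marked vertex diagram of $10_1^{1,1}$ to crossing-less diagrams using only Reidemeister~II moves. The one thing you leave undone --- actually producing the R3-free reduction sequences for $L^-$ and $L^+$ rather than anticipating a greedy simplification or a Yoshikawa-move workaround --- is exactly the content the paper supplies pictorially, so your plan is correct and complete once that explicit verification is carried out.
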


\begin{proof}

Figure \ref{fig:10_11} shows that both the negative and positive resolutions of $10_1^{1,1}$'s marked vertex diagram only need Reidemeister II moves to achieve crossing-less diagrams. Thus, there is a broken sheet diagram representing $10_1^{1,1}$ with no triple points.

\end{proof}

\begin{figure}[h]

\begin{overpic}[unit=.5mm,scale=.6]{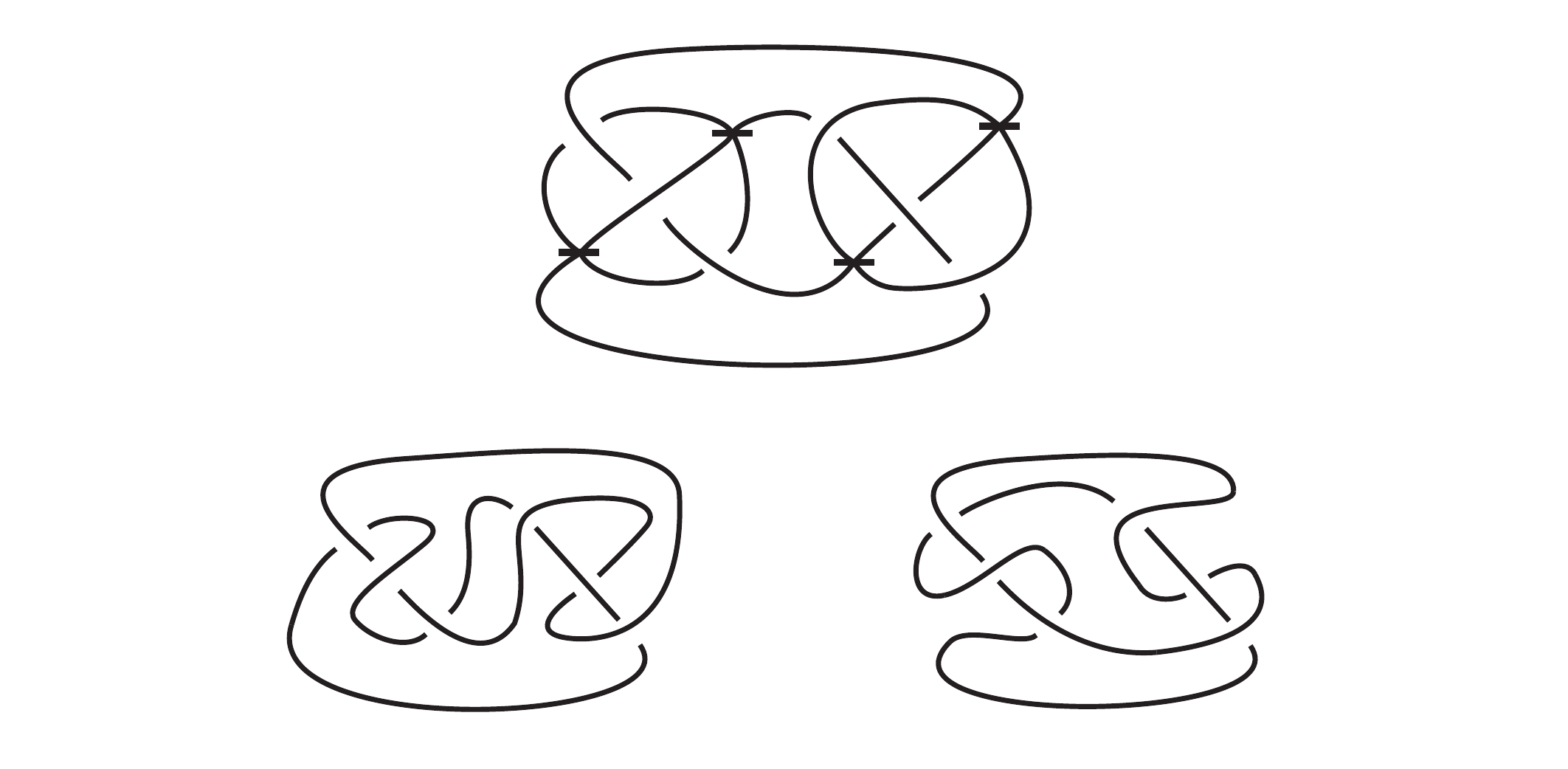}

\end{overpic}

\caption{Negative and positive resolutions of $10_1^{1,1}$.}
\label{fig:10_11}
\end{figure}

\begin{theorem}

The surface-link $10_1^{-1,-1}$ has a triple point number no greater than 12 and no less than 2.

\end{theorem}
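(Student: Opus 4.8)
The statement splits into an upper bound $t(10_1^{-1,-1})\le 12$ and a lower bound $t(10_1^{-1,-1})\ge 2$, and the plan is to attack these by two completely separate methods. The lower bound falls out quickly from the machinery already assembled in Lemma~\ref{lem}, while the upper bound is an explicit construction following the recipe of Section~\ref{sec2}.

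For the lower bound the plan is to invoke Lemma~\ref{lem}. First I would record that, as noted after that lemma, every surface in Yoshikawa's table other than $2_1^{-1}$ is $\mathbf{P}^2$-irreducible, so in particular $10_1^{-1,-1}$ is. Next I would read off its two components from the superscripts $-1,-1$: each component is a (non-orientable genus one) projective plane, hence non-orientable, and each is $\mathbf{P}^2$-irreducible, so the hypotheses of Lemma~\ref{lem} are met. The key input is then the classical fact that an embedded $\mathbb{RP}^2$ in $\mathbb{R}^4$ has normal Euler number $\pm 2$, so $|e(F_1)|=|e(F_2)|=2$ independently of the particular embeddings. Feeding this into Lemma~\ref{lem} yields $t(10_1^{-1,-1})\ge (|e(F_1)|+|e(F_2)|)/2=(2+2)/2=2$, which is the desired lower bound.

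For the upper bound the plan is to build an explicit broken sheet diagram by the procedure of Section~\ref{sec2} and count its triple points. Starting from the representative ch-diagram of $10_1^{-1,-1}$ in Figure~\ref{fig:yoshikawa}, I would form its negative and positive resolutions $L^-$ and $L^+$ and then exhibit, for each, a sequence of Reidemeister moves reducing it to a crossingless diagram. Since the triple points of the induced broken sheet diagram are in bijection with the Reidemeister III moves used (each Reidemeister III move producing exactly one triple point, while Reidemeister I and II moves contribute only branch points and extrema of double point curves), it suffices to find reduction sequences for $L^-$ and $L^+$ whose total number of Reidemeister III moves is at most $12$. Inserting the saddle sheets for the marked vertices between the two resolutions and capping off the crossingless diagrams $O^-$ and $O^+$ with trivial disks then produces a broken sheet diagram of $10_1^{-1,-1}$ realizing this triple point count.

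I expect the upper bound to be the main obstacle. The lower bound is essentially automatic once the normal Euler numbers are pinned down, but the value $12$ rests on actually finding reasonably efficient Reidemeister sequences for both resolutions and honestly tallying their Reidemeister III moves, which is a delicate piece of explicit diagrammatic bookkeeping rather than a conceptual difficulty. In particular one must check that moves introduced to simplify one resolution do not secretly force additional triple points, and that the chosen sequences are compatible with the saddle sheets inserted between $L^-$ and $L^+$.
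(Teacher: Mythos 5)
Your proposal matches the paper's proof essentially step for step: the paper obtains the lower bound exactly as you do, from Lemma~\ref{lem} together with $\mathbf{P}^2$-irreducibility (citing \cite{Yoshikawa}) and the fact that each projective-plane component has normal Euler number of absolute value $2$, and it obtains the upper bound by exhibiting a motion picture of $10_1^{-1,-1}$ (Figures~\ref{fig:neg} and~\ref{fig:pos}) whose induced broken sheet diagram has exactly $12$ triple points, i.e.\ $12$ Reidemeister III moves, just as your construction plan prescribes. The only thing your write-up leaves undone is the explicit diagrammatic bookkeeping you correctly flag as the main obstacle; the paper discharges it by displaying the figures.
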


\begin{proof}
Figures \ref{fig:neg} and \ref{fig:pos} illustrate a motion picture of $10_1^{-1,-1}$ whose induced broken sheet diagram has 12 triple points. Since $10_1^{-1,-1}$ is ${\bf P}^2$-irreducible \cite{Yoshikawa} and each component is a projective plane, Lemma \ref{lem} implies that $2\leq t(10_1^{-1,-1}).$

\end{proof}

\begin{table}[ht]
\setlength{\tabcolsep}{10pt} 
\renewcommand{\arraystretch}{1.5}
\centering
\begin{tabular}{|c||c |c |c |c| c| c |c |c |c| c| c| c| c| c| c| c| c| c| c| c| c| }

\hline 
$F$ & $0_1$ & $2_1^1$ & $2_1^{-1}$ & $6_1^{0,1}$ & $7_1^{0,-2}$ & $8_1$ & $8_1^{1,1}$ & $8_1^{-1,-1}$ & $9_1$ & $9_1^{0,1}$& $9_1^{1,-2}$    \\ \hline

$t(F)$ & 0& 0& 0& 0& 0& 0& 0& 2& 0& 0& 0 \\\hline

\end{tabular}

\vspace{5mm}

\begin{tabular}{|c||c |c| c| c| c| c| c| c| c| c| c| c| c| c| c| c| c| c| c| c| c| c| }

\hline
$F$ & $10_1$ & $10_2$ & $10_3$ & $10_1^1$ & $10_1^{0,1}$ & $10_2^{0,1}$ & $10_1^{1,1}$ & $10_1^{0,0,1}$  \\\hline 
$t(F)$ & 0& 4 & $6$& 0 &0 &0&0&0\\ \hline

\end{tabular}

\vspace{5mm}

\begin{tabular}{|c||c |c| c| c| c| c| c| c| c| c| c| c| c| c| c| c| c| c| c| c| c| c| }

\hline
$F$  & $10_1^{0,-2}$ & $10_2^{0,-2}$ & $10_1^{-1,-1}$ & $10_1^{-2,-2}$  \\\hline 
$t(F)$  & 0 &  $ t(10_2^{0,-2}) \leq 10$ & $2\leq t(10_1^{-1,-1})\leq 12$ & 0\\ \hline

\end{tabular}

\vspace{5mm}
\caption{Triple point number data on the surface-links represented in Yoshikawa's table \cite{Yoshikawa}.}
\label{tab:1}
\end{table}

\begin{theorem}

The surface-link $10_2^{0,-2}$ has a triple point number no greater than 10.
\end{theorem}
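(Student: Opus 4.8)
The plan is to produce an explicit broken sheet diagram for $10_2^{0,-2}$ with exactly ten triple points, exactly as in the proofs of the preceding upper-bound theorems (for instance the $10_1^{-1,-1}\le 12$ bound). The entire mechanism is supplied by Section \ref{sec2}: starting from an admissible ch-diagram, one chooses sequences of Reidemeister moves carrying $L^-$ and $L^+$ to crossing-less diagrams $O^-$ and $O^+$, translates those moves into local sheets, inserts the saddle sheets dictated by the marked vertices, and caps off with trivial disks. The crucial bookkeeping fact is that triple points of the induced diagram are in bijection with the Reidemeister III moves appearing in these two simplification sequences, while Reidemeister I and II moves contribute only branch points and maxima/minima of double point curves. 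Hence the whole problem reduces to exhibiting reductions of $L^-$ and $L^+$ whose \emph{combined} number of Reidemeister III moves is at most ten.

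Concretely, I would first take the representative ch-diagram for $10_2^{0,-2}$ from Yoshikawa's table (Figure \ref{fig:yoshikawa}), read off its two marked vertices, and write down both resolutions. Admissibility guarantees that $L^-$ and $L^+$ are each unlinks, so crossing-less diagrams $O^-$ and $O^+$ certainly exist; the content is in finding \emph{efficient} paths to them. I would draw each resolution explicitly and trace a simplification, recording every Reidemeister III move and discarding the (free) Reidemeister I and II moves from the count. The target is a total tally of ten across the two resolutions, which I would then display in a pair of motion-picture figures analogous to Figures \ref{fig:neg} and \ref{fig:pos} used for $10_1^{-1,-1}$.

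The main obstacle is precisely the minimization of Reidemeister III moves. A naive untangling of each resolution will generically use more than five R-III moves apiece, so some care is required to find reductions that stay within budget. The useful leverage points are: exploiting any symmetry between the positive and negative resolutions so that a reduction of one can be mirrored to bound the other; choosing the order of crossing removals so that crossings are cancelled by Reidemeister II moves (which are free) rather than pushed past one another by Reidemeister III moves; and using planar isotopy to position strands so that triple-point-inducing passes are consolidated. Getting the count down to ten is essentially a diagram-chasing optimization, and verifying that no further R-III move sneaks in is the delicate part.

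Once such reductions are in hand, the assembly is routine: translate each Reidemeister move into its local sheet, insert the two saddle sheets corresponding to the marked vertices (Figure \ref{fig:saddle}), and cap $O^-$ and $O^+$ with trivial disks to close up the surface. The resulting broken sheet diagram represents $10_2^{0,-2}$ and, by the bijection between its triple points and the recorded Reidemeister III moves, has exactly ten triple points, giving $t(10_2^{0,-2})\le 10$.
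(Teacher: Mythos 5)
Your strategy is exactly the paper's: the published proof consists of an explicit motion picture of $10_2^{0,-2}$ (Figure \ref{fig:100-2}) containing precisely ten Reidemeister III moves, which via the Section \ref{sec2} translation yields an induced broken sheet diagram with ten triple points. Your reduction of the problem --- count only R-III moves in the simplifications of $L^-$ and $L^+$ to crossing-less diagrams, since R-I and R-II moves contribute only branch points and extrema of double point curves, then insert saddle sheets and cap with trivial disks --- is a faithful account of that machinery.

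However, as a proof the proposal has a genuine gap: it never exhibits the witness. For an upper bound established by construction, the entire mathematical content is the explicit sequence of Reidemeister moves (equivalently, the motion picture) realizing the count of ten, and you explicitly defer this, calling it ``a diagram-chasing optimization'' with a ``target'' of ten. Nothing in your argument shows that a reduction with at most ten combined R-III moves exists; a priori the resolutions of the table diagram might force more, and your own text concedes that a naive untangling ``will generically use more than five R-III moves apiece.'' Until the concrete move sequences are drawn and checked, you have only proved the tautology that \emph{if} such reductions exist, then $t(10_2^{0,-2})\leq 10$. A secondary, minor issue: you assert the ch-diagram for $10_2^{0,-2}$ has two marked vertices, but this is unverified and plays no role in your argument; the number of marked vertices is constrained by the Euler characteristics of the sphere and Klein bottle components, and misreading it would change the saddle sheets you insert. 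The fix is simply to carry out what you outline and produce the analogue of Figure \ref{fig:100-2}, which is what the paper does.
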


\begin{proof}

A motion picture of an induced broken sheet diagram of $10_2^{0,-2}$ is shown in Figure \ref{fig:100-2}. This motion picture has 10 Reidemeister III moves that occur between the stills connected by an arrow.

\end{proof}

\begin{remark}
Figure \ref{fig:100-2} shows that the Klein bottle component of $10_2^{0,-2}$ has a normal Euler number of 0. Therefore, Lemma \ref{lem} does not give a positive lower bound. The symmetric quandle cocycle invariant has proven useful for generating lower bounds on the triple point number of non-orientable surface-links, see \cite{carter2009symmetric}, \cite{sym}, \cite{oshiro}, and \cite{Oshiro2011}. A symmetric quandle must have a good involution with a fixed point in order to color $10_2^{0,-2}$. The trivial symmetric quandles of  \cite{Oshiro2011} color $10_2^{0,-2}$ but the given symmetric quandle cocycles have a weight of 0 for all such coloring. The dihedral quandle $R_4$ colors  $10_2^{0,-2}$ and the identity is a good involution of $R_4$, but there are no calculated symmetric quandle cocycles of $(R_4, \text{id})$. None of the other explicitly defined symmetric quandles of \cite{carter2009symmetric}, \cite{sym}, \cite{oshiro}, and \cite{Oshiro2011} color $10_2^{0,-2}$ or admit a coloring that gives a non-zero weight with the explicitly defined symmetric quandle cocycles.

\end{remark}

\section*{Acknowledgements} I would like to thank my advisor Jennifer Schultens for her revisions and encouragement.

\begin{figure}[h]
 
\begin{overpic}[unit=.399mm,scale=.6]{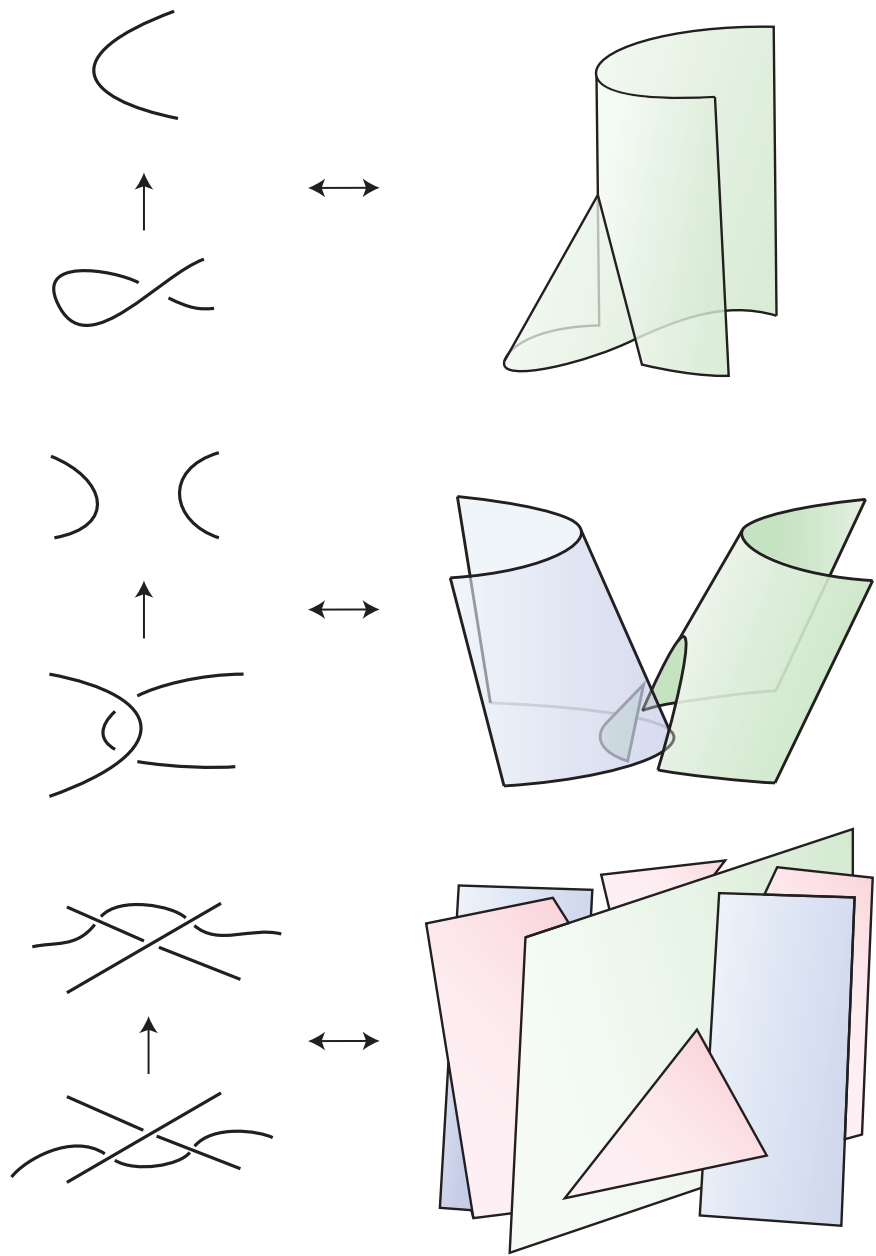}

\end{overpic}

\caption{Relationship between Reidemeister moves in a motion picture and broken sheet diagrams.}
\label{fig:r}

 \end{figure}

\begin{figure}[h]

\begin{overpic}[unit=.5mm,scale=.75]{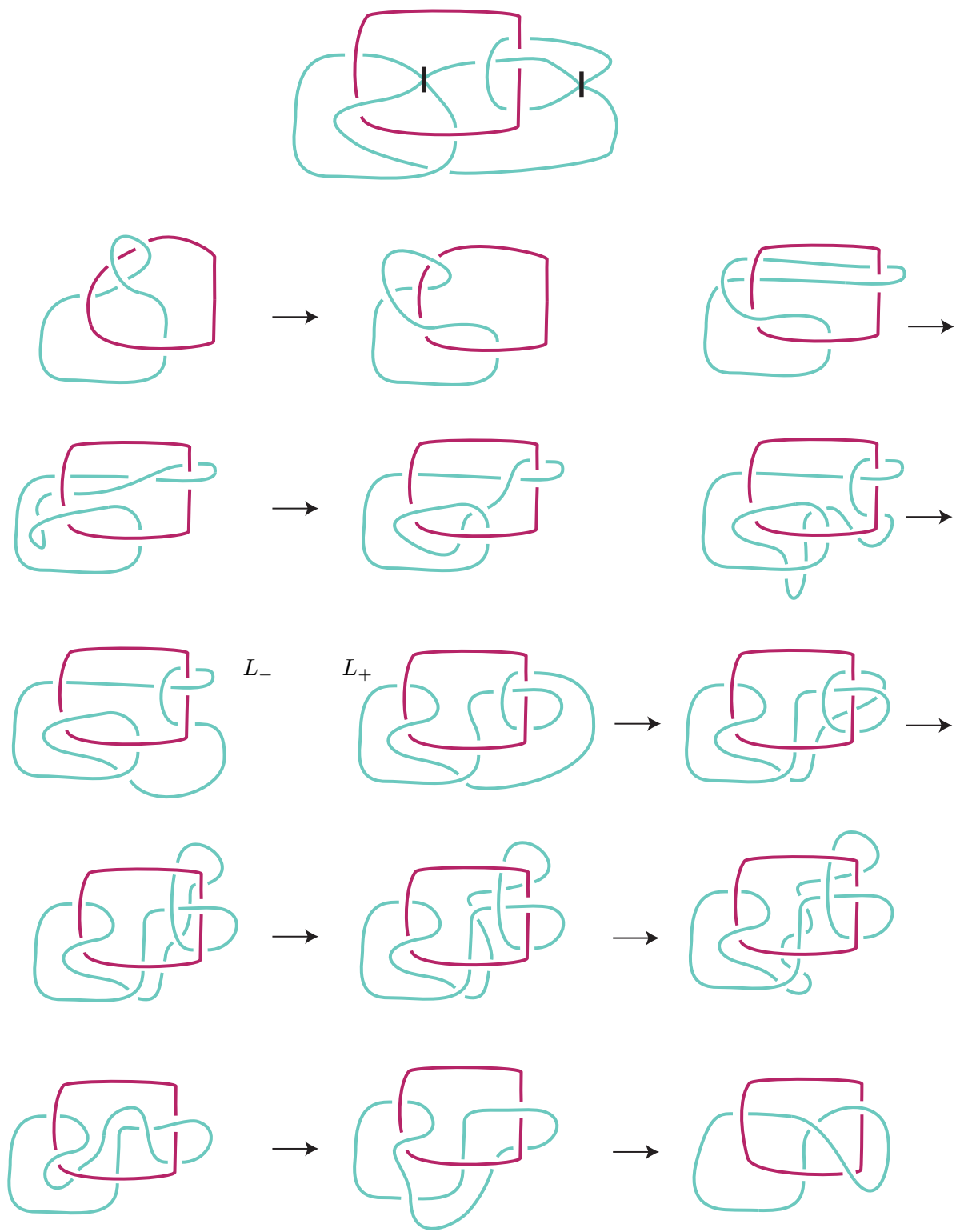}\put(84,190){$L_-$}\put(117,190){$L_+$}

\end{overpic}

\caption{A motion picture of $10_2^{0,-2}$.}
\label{fig:100-2}
\end{figure}

\begin{figure}[h]

\begin{overpic}[unit=.5mm,scale=.75]{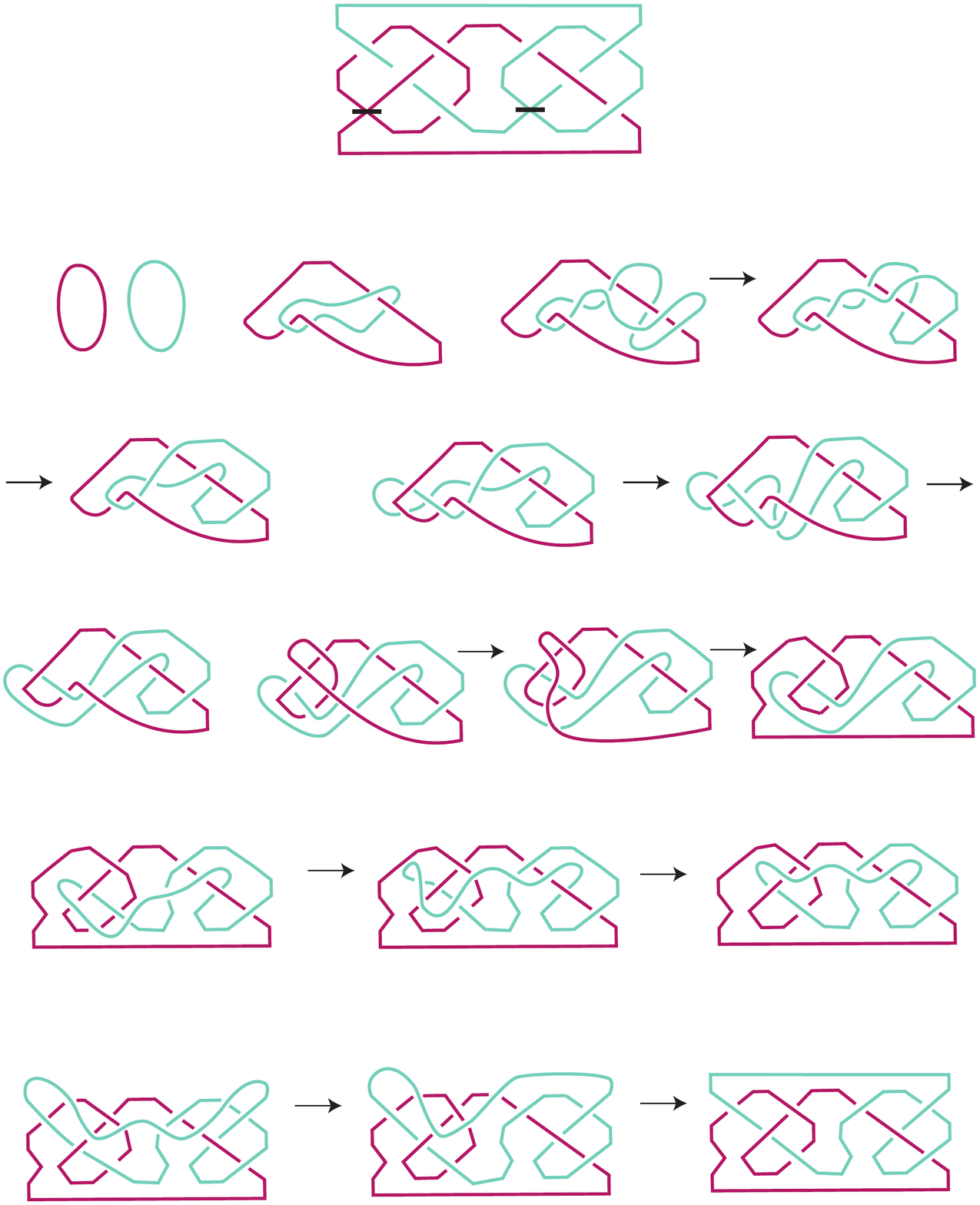}\put(318,54){$L_-$}

\end{overpic}

\caption{A motion picture of $10_1^{-1,-1}$, 1 of 2.}
\label{fig:neg}
\end{figure}

\begin{figure}[h]

\begin{overpic}[unit=.5mm,scale=.75]{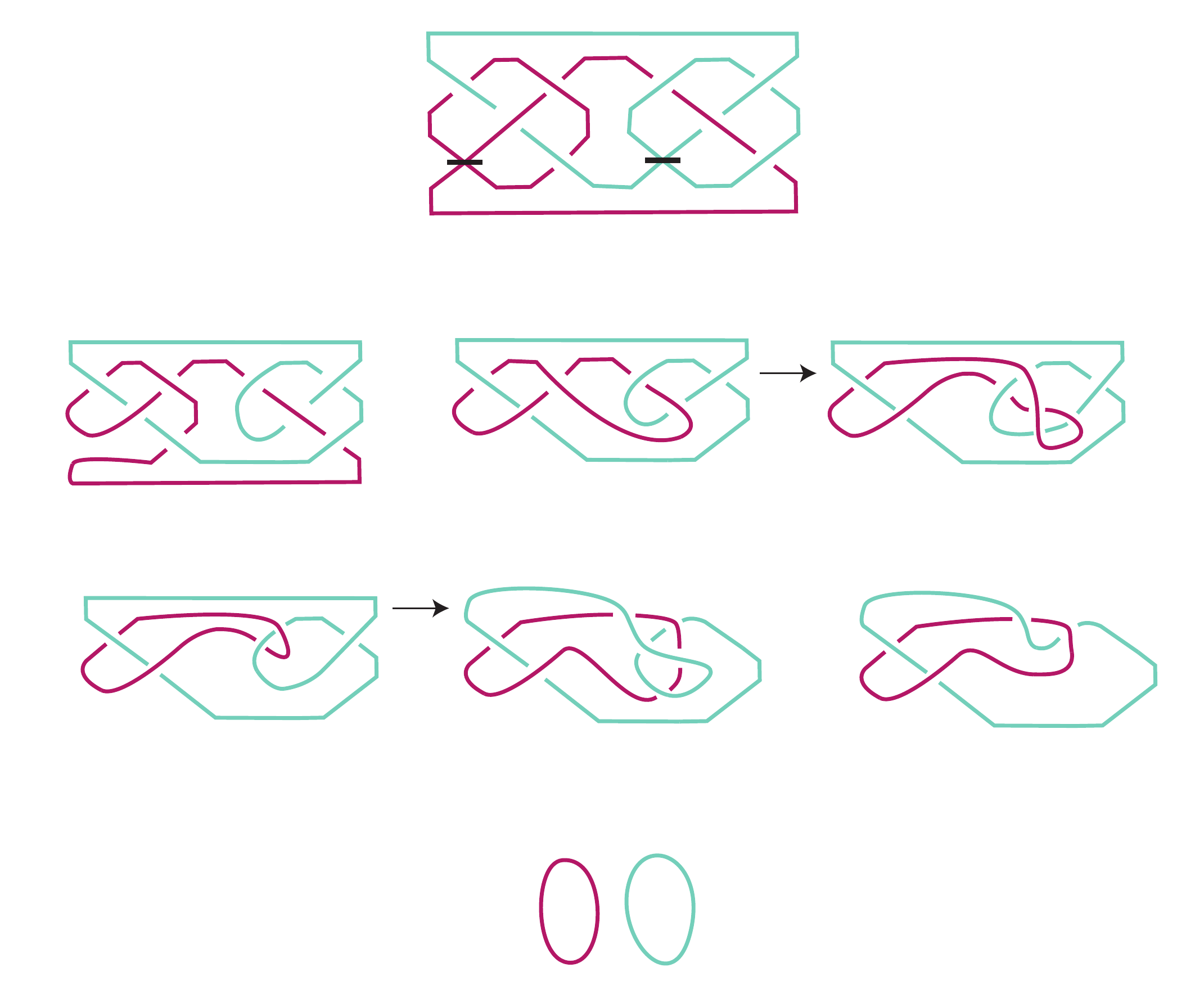}\put(2,184){$L_+$}

\end{overpic}

\caption{A motion picture of $10_1^{-1,-1}$, 2 of 2.}
\label{fig:pos}
\end{figure}

        \bibliographystyle{amsplain}
            \bibliography{proposal}

\end{document}